\theoremstyle{plain}
      \newtheorem{theorem}{Theorem}
       \newtheorem{proposition}{Proposition}
      \newtheorem{lemma}{Lemma}
      \newtheorem{problem}{Problem}
      \newtheorem*{problem*}{Problem}
      \newtheorem{remark}{Remark}
      \newtheorem*{remark*}{Remark}
      \theoremstyle{definition}
      \title
[On recognizing  shapes of polytopes from their shadows]
{On recognizing  shapes of polytopes from their shadows}
\author{Sergii Myroshnychenko}
\begin{document}

\begin{abstract}
Let $P$ and $Q$ be two convex polytopes both contained in the interior of an Euclidean ball $r\textbf{B}^{d}$.
We prove that $P=Q$ provided that their sight cones from any point on the sphere $rS^{d-1}$ are congruent.  We also prove an analogous result for spherical projections.
\end{abstract}

\maketitle

\section{Introduction}

An orthogonal projection of a set on a subspace can be thought of as a shadow of this set, with the source of light located infinitely far away.

There are many problems regarding orthogonal projections. For instance, one of the long-standing problems of convex geometry (\cite{Ga}, Problem 3.2, p. 125)

\begin{problem}\label{pr2}
Let $2\le k\le d-1$. Assume that $K$ and $L$ are convex bodies in ${\mathbb E}^d$ such that the projections $K|H$ and $L|H$ are congruent for all subspaces $H \subset \mathbb{E}^d$, $\dim H = k$. Is $K$ a translate of $\pm L$?
\end{problem}

Here we say that two subsets  $A, B \subset \mathbb{E}^d$ are congruent, if there exists an orthogonal transformation $\varphi \in O(d)$ and a vector $b \in \mathbb{E}^d$, such that $\varphi(A) + b = B$.

There is also a broad class of tomography type problems related to \emph{isoptic} characterization of convex bodies. For example, the following
\begin{problem}
Let $C \subset \mathbb{E}^2$ be a smooth convex curve that is contained in the interior of a certain circle $S$. Assume that from any point on $S$ curve $C$ subtends the same angle. Can we conclude that $C$ is a circle?
\end{problem}
It was shown by Green (see \cite{Gr}) that this problem has a negative answer. However, Klamkin (see \cite{Kl}) conjectured that if there exist two distinct circles with the same property for the curve $C$, then the answer is affirmative; this was proved in \cite{N}. Similar result was generalized to higher dimensions (the case of a convex body contained in the interior of a sphere) in \cite{KO}.

One can also ask an analogous question for non-constant angles. It was shown in \cite{KK} that in the class of convex polygons the measure of the subtended angles as a function on the circle $S$ defines the set uniquely. At last, a very interesting result from \cite{Mat} shows that not necessarily the measure, but the shape of a subtended solid angle from points on a sphere containing a convex body characterizes balls uniquely.

Following the above considerations, we ask the following

\begin{problem}\label{spr1}
Let $K, L$ be two convex bodies contained in the interior of a ball $r\textbf{B}^{d}$, $d\ge 3, r>0$. Assume that for any point $z$ (a source of light) on the sphere $rS^{d-1}$, the spherical projections $K_z$ and $L_z$ of the bodies are congruent. Does it follow that $K=L$? (see Figure \ref{pic}).

\end{problem}

\begin{figure}[h]
  \centering
  \includegraphics[scale=0.3]{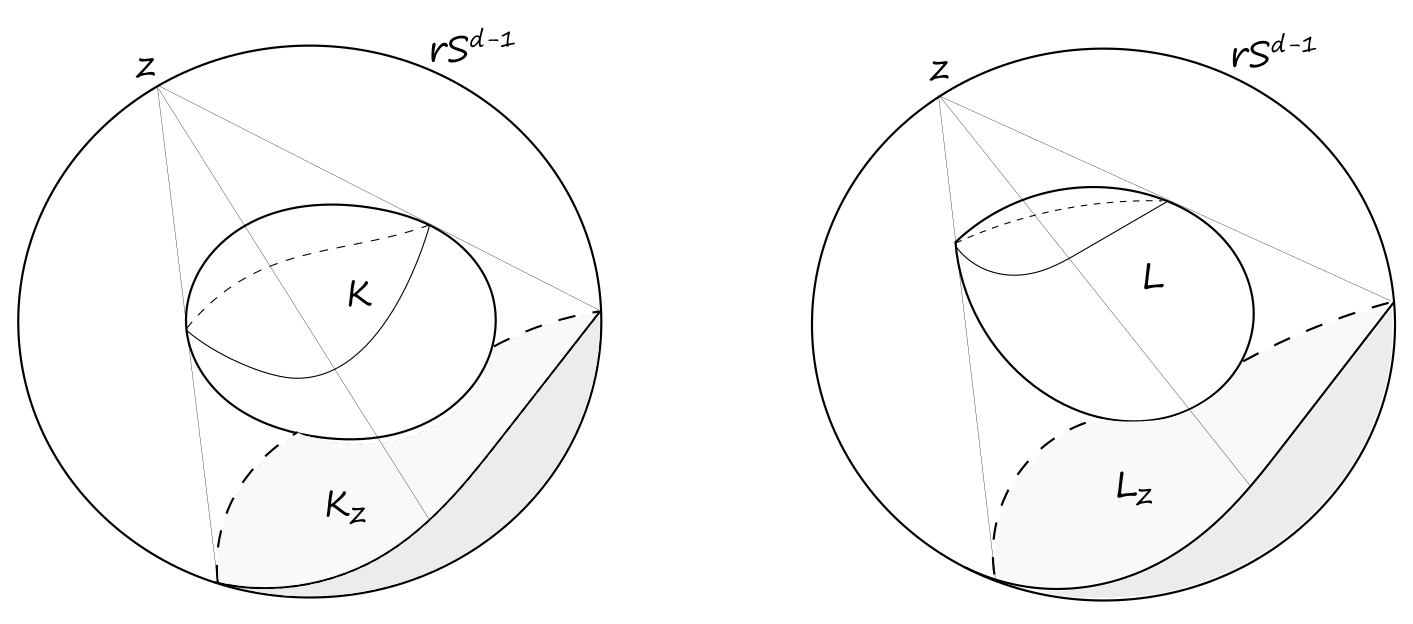}\\
  \caption{Congruency of spherical projections:\newline
   $\forall z \in S^{d-1}  \quad \exists \varphi_z \in O(d), \quad \varphi_z(K_z) = L_z.$}
\label{pic}
\end{figure}

Here, $K_z = \{x \in rS^{d-1}: xz \cap K\neq\emptyset \}$, and $xz$ denotes the interval with end points $x$ and $z$. Also, in this case we say that two subsets  $A, B \subset rS^{d-1}$ are congruent, if there exists an orthogonal transformation $\varphi \in O(d)$, such that $\varphi(A) = B$.

It's not difficult to see that in the case $d=2$ Problem \ref{spr1} has a negative answer, which follows from the result of Green (see \cite{Gr}). This result is analogous to the well-known fact that in $\mathbb{E}^2$ there exist many not congruent bodies of constant width (see \cite{Ga}, p. 131). However, it is known that for $d=2$ in the class of convex polygons Problem \ref{spr1} has the affirmative answer (see \cite{KK}).

We observe that when the radius $r$ of the ball containing the bodies approaches infinity, the above problem becomes very similar to Problem \ref{pr2}. If one attempts to solve Problem \ref{spr1}, one sees that one of the main difficulties is related to the lack of convenient notions (for example, a support function as in the case of orthogonal projections).
Not a difficult consideration shows the affirmative result in the class of Euclidean balls for Problem \ref{spr1}.

\begin{proposition}\label{balls}
	Let $d\geq 2, r> 0$ and let $K, L$ be two Euclidean balls contained in the interior of a ball $r\textbf{B}^{d}$. Assume that for any point $z$ on the sphere $rS^{d-1} = \partial(r\textbf{B}^d)$, the spherical projections $K_z$ and $L_z$ of the balls are congruent. Then  $K=L$.
\end{proposition}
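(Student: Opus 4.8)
The plan is to reduce the congruence hypothesis to the equality of a single isometry-invariant attached to each shadow, and then to solve the resulting functional equation for the centers and radii. Write $K=\bar B(c_K,\rho_K)$ and $L=\bar B(c_L,\rho_L)$, with $|c_K|,|c_L|<r$, and let $O$ be the center of the sphere. Fix $z\in rS^{d-1}$. A chord $[z,x]$ meets $K$ exactly when the direction $v=(x-z)/|x-z|$ lies in the solid circular cone with apex $z$, axis $u_K=(c_K-z)/|c_K-z|$ and half-angle $\alpha_K(z)$ given by $\sin\alpha_K(z)=\rho_K/|z-c_K|$. Thus $K_z$ is the intersection of $rS^{d-1}$ with this tangent cone; it is symmetric across the hyperplane through $z$, $c_K$ and $O$, and is completely determined by $\alpha_K(z)$ together with the tilt angle $\delta$ between $u_K$ and the inward radial direction $-z/r$. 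Since every $\varphi\in O(d)$ preserving $rS^{d-1}$ is an isometry of the round sphere, the hypothesis $\varphi_z(K_z)=L_z$ forces $K_z$ and $L_z$ to have equal spherical (geodesic) diameter.

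The key step is to compute this diameter. Solving $|z+tv|^2=r^2$ for the second intersection of the ray $z+tv$ with the sphere gives $x=z-2(z\cdot v)\,v$, whence the central angle $\angle\,(x,O,-z)$ equals $2\gamma$, where $\gamma$ is the angle between $v$ and $-z/r$. Thus, in geodesic polar coordinates centered at the antipode $-z$, the spherical projection is the angle-doubling map $\gamma\mapsto 2\gamma$. Under it the cone $\{\angle(v,u_K)\le\alpha_K\}$ has its two extreme points on the symmetry meridian at central angles $2(\delta+\alpha_K)$ and $2|\delta-\alpha_K|$ from $-z$, and a direct computation shows these two points subtend central angle exactly $4\alpha_K(z)$ at $O$, independently of the tilt $\delta$. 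Granting that this meridian pair realizes the diameter, equality of diameters yields $\alpha_K(z)=\alpha_L(z)$, hence
\[
\frac{\rho_K}{|z-c_K|}=\frac{\rho_L}{|z-c_L|}\qquad\text{for every }z\in rS^{d-1}.
\]

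Squaring and clearing denominators gives $\rho_K^2|z-c_L|^2=\rho_L^2|z-c_K|^2$ for all $z$ on the sphere. Using $|z|^2=r^2$ and expanding, the difference of the two sides becomes an affine function of $z$ vanishing on all of $rS^{d-1}$; since the sphere affinely spans $\mathbb{E}^d$, it vanishes identically, giving $\rho_K^2 c_L=\rho_L^2 c_K$ and $\rho_K^2(r^2+|c_L|^2)=\rho_L^2(r^2+|c_K|^2)$. Setting $\lambda=\rho_L^2/\rho_K^2$, the first relation gives $c_L=\lambda c_K$ and the second then reduces to $(1-\lambda)(r^2-\lambda|c_K|^2)=0$. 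Since $|c_K|<r$, the factor $r^2-\lambda|c_K|^2$ can vanish only for $\lambda=r^2/|c_K|^2>1$, which would push $c_L=\lambda c_K$ outside the sphere; hence $\lambda=1$, so $\rho_K=\rho_L$ and $c_K=c_L$, that is $K=L$. (The case $c_K=0$ is immediate, as then $c_L=0$ and the radii agree directly.)

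The main obstacle is the diameter computation for $d\ge 3$: one must verify that the maximal geodesic distance over all pairs of points of $K_z$ is attained by the two extreme points on the symmetry meridian. Writing $x_i=z-2(z\cdot v_i)v_i$, this amounts to minimizing
\[
x_1\cdot x_2=r^2\bigl[\,1-2\cos^2\gamma_1-2\cos^2\gamma_2+4\cos\gamma_1\cos\gamma_2\cos\theta_{12}\,\bigr]
\]
over directions $v_1,v_2$ ranging in the tangent cone, where $\theta_{12}=\angle(v_1,v_2)$. For $d=2$ the shadow is a single circular arc and its diameter is its arclength $4\alpha_K(z)$, so the optimization is vacuous; everything else above is a short, direct calculation.
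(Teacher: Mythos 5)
Your end-game is fine: granting the identity $\rho_K/|z-c_K|=\rho_L/|z-c_L|$ on the sphere, the affine-span argument pinning down $c_K=c_L$ and $\rho_K=\rho_L$ is correct and is a clean, genuinely different finish from the paper's. The problem is the bridge you use to get that identity. The claim that the geodesic diameter of $K_z$ equals $4\alpha_K(z)$ is not only left unproved (you flag it yourself as ``the main obstacle''); it is false precisely when $\alpha_K(z)>\pi/4$, and the hypotheses do not let you avoid that regime. Your own angle-doubling computation shows why: for $c_K=O$ the shadow $K_z$ is the spherical cap of angular radius $2\alpha_K$ about $-z$, and once $2\alpha_K>\pi/2$ this cap contains a closed hemisphere, hence contains antipodal pairs, so its geodesic diameter is $\pi$; meanwhile your meridian pair subtends $\arccos(\cos 4\alpha_K)=2\pi-4\alpha_K<\pi$, so it does not even realize the diameter. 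For $K=\overline{B}(O,\,0.9r)$ one has $\sin\alpha_K(z)=0.9$ for every $z$, so every shadow has diameter $\pi$, the invariant carries no information at any $z$, and the functional equation is never established. (In the good regime $\alpha_K(z)\le\pi/4$ your claim is true and your acknowledged optimization can be closed: the map $(\gamma,w)\mapsto(2\gamma,w)$ about $-z$ stretches radially by $2$ and tangentially by $\sin 2\gamma/\sin\gamma=2\cos\gamma\le 2$, hence is $2$-Lipschitz on the closed hemisphere, giving $\mathrm{diam}(K_z)\le 2\,\mathrm{diam}(C)=4\alpha_K$ with equality on the meridian. But nothing guarantees any $z$ lies in that regime.)

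The repair is to change the invariant, which is essentially what the paper does. Test only at the two points $z_1,z_2$ where the line through $c_K$ and $c_L$ meets $rS^{d-1}$: there the two tangent cones share an axis, so one of $K_{z_i}$, $L_{z_i}$ contains the other, and if the half-angles differed the difference would contain a nonempty open subset of the sphere. Since an orthogonal map preserves spherical measure, congruence rules this out, giving $\alpha_K(z_i)=\alpha_L(z_i)$ for $i=1,2$ with no diameter computation and no restriction on the size of the balls. These two equations, combined with $|z_1O_K|+|z_2O_K|=|z_1O_L|+|z_2O_L|=|z_1z_2|$, already force $O_K=O_L$ and $\rho_K=\rho_L$ (or you can feed them into your own algebra, which only needs the identity at finitely many well-chosen points). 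So your architecture is salvageable, but as written the proof does not go through.
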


It is also worth stating another similar natural problem from \cite{KK}
\begin{problem} \label{cones}
	If $K$ and $L$ are two convex bodies inside the sphere $rS^{d-1}$, and for each point of $rS^{d-1}$ the supporting cones of $K$ and $L$ from this point are congruent, then is it true that $K=L$?
\end{problem}
It follows from the above mentioned result of Matsuura \cite{Mat} that the answer is affirmative if one of the bodies is a ball. Also, Bianchi and Gruber \cite{BG} proved that if one of the bodies is an ellipsoid then the other body must also be an ellipsoid. In general, the problem remains open.

The main results of this paper show that both Problems \ref{spr1} and \ref{cones} have the affirmative answers  in the class of convex polytopes. 

\begin{theorem}\label{sth1}
	Let $d\geq 3, r> 0$ and let $P,Q$ be two convex polytopes contained in the interior of a ball $r\textbf{B}^{d}$. Assume that for any point $z$ on the sphere $rS^{d-1} = \partial(r\textbf{B}^d)$, the support cones $C(z,P)$ and $C(z,Q)$ of the polytopes are congruent. Then $P=Q$.
\end{theorem}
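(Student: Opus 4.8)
The plan is to reconstruct, from the family of congruent support cones, the exact list of facet hyperplanes of each polytope together with the halfspaces they bound, and then to invoke the fact that a convex polytope is the intersection of the closed halfspaces supporting its facets. Throughout I will exploit that any rotation- or translation-invariant quantity attached to a \emph{single} cone is an honest function of $z$, so that the (a priori $z$-dependent and unknown) congruences $\varphi_z$ never have to be tracked: only pointwise equalities of such invariants between $P$ and $Q$ will be used.

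First I would record the basic dictionary. For a facet $F$ of $P$, let $H_F$ denote its affine hull, a hyperplane supporting $P$; since $P\subset\mathrm{int}(r\mathbf{B}^d)$, the slice $H_F\cap rS^{d-1}$ is a genuine $(d-2)$-dimensional sphere. This is exactly where $d\ge 3$ enters, and it explains the failure for $d=2$, where the slice is only a pair of points. For $z\in H_F\cap rS^{d-1}$ one checks directly that $H_F$ supports the cone $C(z,P)$ and that $C(z,P)\cap H_F$ is the $(d-1)$-dimensional cone over $F$ with apex $z$; in particular $C(z,P)$ acquires a facet lying in $H_F$, on the same side of $H_F$ as $P$.

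The heart of the argument is to recover the unoriented arrangement $\{H_F\}$. I would use the combinatorial type (face lattice) of the cone, which is a congruence invariant, so that $z\mapsto\mathrm{combtype}\,C(z,P)$ equals $z\mapsto\mathrm{combtype}\,C(z,Q)$ pointwise on $rS^{d-1}$. This type is governed solely by the set of facets of $P$ visible from $z$, which is locally constant and changes precisely when $z$ crosses some facet hyperplane $H_F$ (where $F$ flips between visible and hidden). Hence the locus $W_P$ where $\mathrm{combtype}\,C(z,\cdot)$ fails to be locally constant is, up to closure, the union $\bigcup_F\big(H_F\cap rS^{d-1}\big)$, and the pointwise equality above gives $W_P=W_Q$. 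Because each slice $H_F\cap rS^{d-1}$ is a round $(d-2)$-sphere, and a finite union of such spheres determines its members uniquely (they are the maximal round subspheres contained in it), I can read off $\{H_F^{\,P}\}=\{H_G^{\,Q}\}$: the two polytopes have the same facet hyperplanes.

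It then remains to orient each hyperplane, i.e.\ to decide the side on which the polytope lies, and this is where I expect the real work. For a pair of adjacent facets $F_i,F_j$ I would take $z\in H_i\cap H_j\cap rS^{d-1}$, which is a nonempty $(d-3)$-sphere because the common ridge lies inside the ball; there $C(z,P)$ has two special facets, in $H_i$ and in $H_j$, meeting along a ridge, and the interior dihedral angle between them equals the wedge angle of $P$ between these facet planes. Congruence forces this dihedral angle to coincide with that of $Q$, which pins down the relative orientation of the two halfspaces, since the two choices realizing a prescribed non-right angle are related by flipping both normals. Running this over a spanning tree of the facet-adjacency graph fixes all orientations up to a single global flip $\epsilon\mapsto-\epsilon$, and that global flip is excluded because $\bigcap_F H_F^{-\epsilon_F}$ must be a nonempty bounded polytope inside the ball. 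Hence the oriented facet halfspaces of $P$ and $Q$ coincide and $P=\bigcap_F H_F^{-}=Q$. The conceptual obstacle is precisely that the congruences $\varphi_z$ vary uncontrollably with $z$, so no feature of $P$ can be followed continuously; the whole scheme is designed to sidestep this by reconstructing the global hyperplane arrangement purely from pointwise congruence invariants, and the delicate technical points are verifying that the combinatorial-type walls are exactly the facet-hyperplane slices and handling the right-angle and occlusion degeneracies in the orientation step.
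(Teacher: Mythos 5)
Your strategy (recover the unoriented facet--hyperplane arrangement from a pointwise congruence invariant of the cones, then fix orientations) is genuinely different from the paper's, which instead fixes an open set of sources $z$ on which the vertex-matching permutation induced by $\varphi_z$ is constant (via Baire category), proves the identity $\angle xzy=\angle pzq$ of edge-subtended angles there, propagates it to the whole sphere by real-analyticity, and concludes that matched edges literally coincide. Unfortunately your version has two gaps that I do not see how to close as written.

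First, the claim that the locus where $z\mapsto\mathrm{combtype}\,C(z,P)$ fails to be locally constant is the full union $\bigcup_F\bigl(H_F\cap rS^{d-1}\bigr)$ is asserted, not proved, and it is delicate. The combinatorial type can perfectly well agree on the two open sides of a wall: for a cube in $\mathbb{E}^3$ the cone is a hexagonal cone both when two faces are visible and when three are, so crossing the wall between these regions does not change the type on either side. Your argument then hinges entirely on the type \emph{at} the wall differing from the type just off it (there the cone over the edge-on facet $F$ absorbs several silhouette ridges into one facet), and while this holds in the cube example, you give no argument that it holds for every facet of every polytope; an accidental isomorphism of face lattices at the wall would make that wall invisible to your invariant, and then $H_F$ is never recovered. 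Second, and more seriously, the orientation step does not follow from congruence of the cones. For $z\in H_i\cap H_j\cap rS^{d-1}$ the congruence $\varphi_z$ maps $C(z,P)$ onto $C(z,Q)$ but is under no obligation to carry the facet of $C(z,P)$ lying in $H_i$ to the facet of $C(z,Q)$ lying in $H_i$; it only matches the cones as abstract metric objects. Hence you cannot compare the dihedral angle at the \emph{specific} ridge of $C(z,P)$ contained in $H_i\cap H_j$ with the angle at the corresponding ridge of $C(z,Q)$ --- exactly the same difficulty (an uncontrolled, $z$-dependent relabelling) that you correctly identify as the conceptual obstacle reappears here, and you have not sidestepped it. In addition, this step tacitly assumes the facets of $Q$ in $H_i$ and $H_j$ are adjacent in $Q$ whenever those of $P$ are adjacent in $P$, and the right-angle degeneracy you flag is left unresolved. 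The paper avoids all of this by never working with hyperplanes or orientations: it pins down the permutation on an open set first, and then the analytic identity of subtended angles forces each matched pair of edges to be equal as point sets, from which $P=Q$ is immediate.
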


\begin{theorem}\label{sth2}
	Let $d\geq 3, r> 0$ and let $P,Q$ be two convex polytopes contained in the interior of a ball $r\textbf{B}^{d}$. Assume that for any point $z$ on the sphere $rS^{d-1} = \partial(r\textbf{B}^d)$, the spherical projections $P_z$ and $Q_z$ of the polytopes  are congruent. Then $P=Q$.
\end{theorem}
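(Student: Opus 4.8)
The plan is to reduce Theorem \ref{sth2} to Theorem \ref{sth1} by showing that the congruence of the spherical projections $P_z$ and $Q_z$ forces the congruence of the support cones $C(z,P)$ and $C(z,Q)$ for every $z\in rS^{d-1}$. The bridge between the two objects is the elementary observation that, since $P$ lies in the interior of the ball and $z$ on its boundary, a ray emanating from $z$ meets $P$ if and only if it meets $rS^{d-1}$ at a point of $P_z$; consequently $C(z,P)$ is precisely the cone with apex $z$ spanned by $P_z$, and $P_z=C(z,P)\cap rS^{d-1}$. Thus a congruence of cones fixing the apex $z$ amounts to the same datum as an orthogonal map of the sphere carrying $P_z$ onto $Q_z$ and fixing $z$. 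The whole difficulty is that the hypothesis only supplies an orthogonal map $\varphi_z$ with $\varphi_z(P_z)=Q_z$, with no a priori control on the image of $z$.

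The key step, then, is to recover the light source $z$ intrinsically from the set $P_z$, and here I would exploit that $P$ is a polytope. For $z\notin P$ the support cone $C(z,P)$ is a pointed polyhedral cone with apex $z$, so its facets lie in finitely many hyperplanes $H_1,\dots,H_m$, all passing through $z$. Each codimension-one piece of $\partial P_z=\partial C(z,P)\cap rS^{d-1}$ therefore lies on the ``small sphere'' $H_i\cap rS^{d-1}$, and every such small sphere passes through $z$. Since a full-dimensional $P$ together with an exterior point $z$ yields a pointed cone, one has $\bigcap_i H_i=\{z\}$ (pointedness of a cone with apex $z$ is exactly the statement that its facet hyperplanes meet only at $z$), and hence $\bigcap_i\bigl(H_i\cap rS^{d-1}\bigr)=\{z\}$. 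Thus $z$ is the unique common point of the supporting small spheres of the boundary pieces of $P_z$, so it is determined by $P_z$ alone; the same description applies to $Q_z$.

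With $z$ recovered intrinsically, the reduction is immediate. An orthogonal map sends $rS^{d-1}$ to itself and small spheres to small spheres, so $\varphi_z$ carries the supporting small spheres of $\partial P_z$ onto those of $\partial Q_z$; taking the common intersection point of each family and invoking the previous step gives $\varphi_z(z)=z$. Because $\varphi_z$ is linear and fixes $z$, it maps the cone with apex $z$ over $P_z$ onto the cone with apex $z$ over $\varphi_z(P_z)=Q_z$, that is, $\varphi_z\bigl(C(z,P)\bigr)=C(z,Q)$. Hence the support cones are congruent for every $z\in rS^{d-1}$, and Theorem \ref{sth1} yields $P=Q$.

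I expect the main obstacle to be the intrinsic recovery of the apex, i.e.\ making the small-sphere argument rigorous: one must check that the boundary pieces of $P_z$ are genuine $(d-2)$-dimensional portions that determine their containing hyperplanes $H_i$, rule out degenerate positions of $z$ where the silhouette cone could lose facets or $P_z$ could fail to be full-dimensional, and confirm that pointedness of $C(z,P)$ is never lost. The last point holds because $P\subset\mathrm{int}(r\mathbf{B}^d)$ forces $z\notin P$, so no line through $z$ can meet the bounded convex set $P$ in both directions; everything else should follow by a general-position argument together with continuity in $z$.
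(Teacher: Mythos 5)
Your proposal is correct, but it takes a genuinely different route from the paper. The paper does not reduce Theorem \ref{sth2} to Theorem \ref{sth1} as a black box; it re-runs the entire argument of Theorem \ref{sth1} (the Baire-category selection of a permutation on an open set $U_0$, then the analyticity argument), replacing the step ``equal angles $\angle xzy=\angle pzq$'' by the observation that the congruent edge-arcs $(xy)_z$ and $(pq)_z$ lie on circles $\Pi(x,y,z)\cap rS^{d-1}$ and $\Pi(p,q,z)\cap rS^{d-1}$ of equal radii, so that equal arc lengths on equal circles give equal angles subtended at $z$. You instead prove the pointwise implication ``$P_z\cong Q_z\Rightarrow C(z,P)\cong C(z,Q)$'' for polytopes, after which Theorem \ref{sth1} applies verbatim. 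Your mechanism is sound: $C(z,P)$ is a full-dimensional pointed polyhedral cone (pointed because $P$ is convex and $z\notin P$), its facet hyperplanes $H_1,\dots,H_m$ all contain $z$ and satisfy $\bigcap_i H_i=\{z\}$ by pointedness; each facet of $P_z$ is a $(d-2)$-dimensional, relatively open piece of the subsphere $H_i\cap rS^{d-1}$, which has positive radius and hence (for $d\ge 3$) determines $H_i$; the family of these supporting subspheres is intrinsic to the set $P_z$, so $\varphi_z$ carries one family onto the other and must fix their unique common point $z$; and an orthogonal map fixing $z$ carries the cone with apex $z$ over $P_z$ (which is exactly $C(z,P)$, since every ray of the cone from $z$ meets $P$ and hence exits through the sphere at a point of $P_z$) onto the cone with apex $z$ over $Q_z$. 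Be aware that your key lemma is irreducibly polytopal: the paper's closing remark exhibits two congruent circles on a sphere whose cones from a common point of that sphere are not congruent, so apex recovery from the shadow fails for general convex bodies (for a ball, the single supporting subsphere of the cap's boundary does not pass through $z$); your argument correctly exploits that for polytopes the facet subspheres all pass through $z$. What your approach buys is a clean logical reduction and, as a by-product, a proof for polytopes of one direction of the equivalence that the paper's final remark only conjectures; what the paper's approach buys is independence from this apex-recovery lemma at the cost of repeating the whole machinery.
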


The affirmative answer to Problem \ref{pr2} in the class of convex polytopes was obtained in \cite{MR}; it is not difficult in the class of Euclidean balls. Quite surprisingly shifts play no role in these settings.

\vspace{0.3cm}
\textbf{Acknowledgments:} The author is grateful to Vlad Yaskin and Dmitry Ryabogin for many fruitful and interesting discussions.

\section{Notation}

Euclidean space of dimension $d$ is denoted by $\mathbb{E}^d$. We use the notation $r\textbf{B}^d=\{x\in \mathbb{E}^d: |x| \le r\}$
for the Euclidean $d$-dimensional ball of radius $r$ in $\mathbb{E}^d$, and
$rS^{d-1}=\{x\in \mathbb{E}^d: |x|=r\}$
for the $(d-1)$-dimensional sphere of radius $r$, which is the boundary of the ball $r\textbf{B}^d$. The orthogonal group of dimension $d$ is denoted by $O(d)$. For any two points $x, y \in \mathbb{E}^d$, the closed interval connecting them is denoted by $xy = \{tx+(1-t)y: t \in [0,1]\}$. For any two points $p,q \in rS^{d-1}$, the shortest arc of a great circle connecting them is denoted by $[pq]$.
The interior of a set $A$ (denoted by $int A$) is the set of points $x\in A$ such that there exists an open set containing $x$ which is fully contained in $A.$ The boundary of a set $A$ (denoted by $\partial A$) is the set defined by $\partial A=A\setminus int A.$

By the shadow boundary we mean the pre-image of the boundary of the shadow. More precisely, for any body $M \subset int (r\textbf{B}^d)$ the shadow boundary from a point $z \in rS^{d-1}$ is defined as
$$
\partial_zM =\{zx \cap M: x \in \partial M_z\},
$$
where $\partial M_z$ is the relative boundary of $M_z$ on the sphere.

For any finite set of points $S=\{x_i\}_{i=1}^N \subset \mathbb{E}^d$, the convex hull of $S$ is defined as $Conv(S) = \left\{ \sum_{i=1}^N \lambda_i x_i: \sum_{i=1}^N \lambda_i=1; \lambda_i \geq 0 \quad \forall i\right\}$. A convex polytope $P\subset {\mathbb E^d}$ is a convex body that is the convex hull of finitely many points. The extreme points of the convex hull are called vertices. 

\textit{A supporting (sight) cone} from a point $z$ outside of a body $K$ is defined as
$$
C(z,K) = \cup_{x \in K} \{z + s(x-z):  \quad s \geq 0\}.
$$

In particular, the spherical projection then can be found as 
$$K_z = \big( C(z, K) \cap rS^{d-1} \big) \setminus \{z\}.$$

Notice that if $P$ is a polytope, $z \not \in P$, and $\{v_i\}_i$ is the set of its vertices in the shadow boundary $\partial_zP$, then for any $y \in \partial C(z,P)$, we have
$$
y= z + \sum_{v_j \in \partial_zP} \lambda_j (v_j - z) \quad \textrm{for some} \quad \lambda_j \geq 0, \quad \{v_j\}_j \subseteq \{v_i\}_i.
$$

In this case, the cone is polyhedral and its \textit{spanning edge} $l_{zv_i}$ is a ray defined as
$$
l_{zv_i} = z + t (v_i - z), \quad t \geq 0.
$$

\section{Proof of Proposition \ref{balls}}

\begin{proof}
	Assume that the balls $K,L$ do not coincide. Denote by $O_K$ and $O_L$ their centers, and by $r_K$ and $r_L$ the radii of $K$ and $L$ respectively. Let $l$ be the line passing through both of the centers. Denote by $z_1$ and $z_2$ the points of the intersection of $l$ with $rS^{d-1}$ (see Figure \ref{spherical}).
	
	\begin{figure}[h]
		\centering
		\includegraphics[scale=0.3]{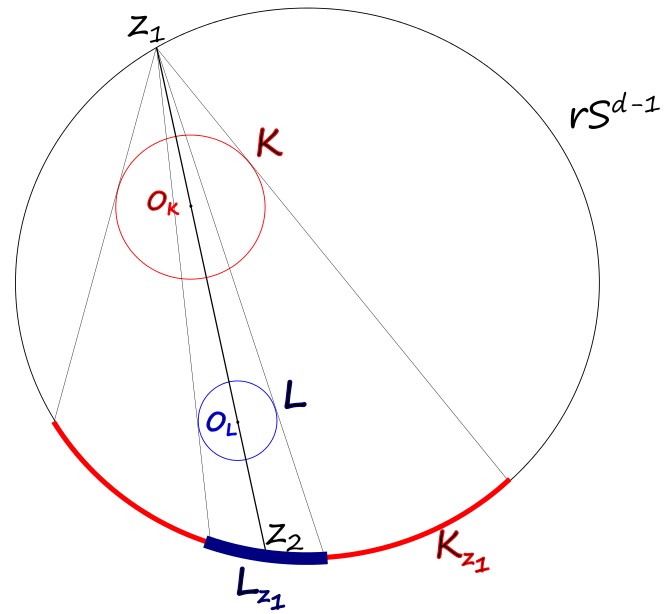}\\
		\caption{Spherical projections of Euclidean balls.}\label{spherical}
	\end{figure}
	
	If $r_L < r_K$, then $L_{z_1} \subsetneq K_{z_1}$. Hence, there does not exist $\varphi_{z_1} \in O(d)$, such that $\varphi_{z_1}(K_{z_1})=L_{z_1}$, which implies that we must have $r_L \geq r_K$. Now, if $r_K < r_L$, we repeat the same argument for $z_2$. Thus, $r_K = r_L$.

	Now, if $|z_1O_K| < |z_1O_L|$, then $C(z_1, L) \subsetneq C(z_1,K)$, which would imply that $L_z \subsetneq K_z$. Similarly, it can be shown that the case $|z_1O_L| < |z_1O_K|$ is not possible either, which implies that $O_K = O_L$. We conclude that $K = L$.
	
\end{proof}

\section{Proof of Theorem \ref{sth1}}

\subsection{The main idea}

The main idea of the proof is to show that the sets of edges of both polytopes coincide. More precisely,  let $\Pi$ be the set of all affine subspaces containing a facet of polytopes $P$ or $Q$. Then $\Pi \cap rS^{d-1}$ is a finite union of sub-spheres of non-trivial radii. Let $U_1$ be a connected component of $U = rS^{d-1} \setminus (\Pi \cap rS^{d-1})$. For any two points $z_1$ and $z_2$ in $U_1$, we have $\partial_{z_1}P =\partial_{z_2} P$ and $\partial_{z_1}Q =\partial_{z_2} Q$, which follows from the considerations in \cite{KP}.

After this, we will prove that the edges of the shadow boundaries $\partial_z P$ and $\partial_z Q$, $z \in U$, are in bijective correspondence. Moreover, we will show that they coincide. This would imply that $P = Q$.

\subsection{Preliminaries}

 Congruency of supporting cones $C(z,P)$ and $C(z,Q)$ implies that both cones have the same number of spanning edges which are congruent. In other words, there exists a fixed $\varphi_z \in O(d)$ such that
\begin{equation}\label{map}
 \varphi_z\Big(\frac{v_i - z}{|v_i-z|}\Big) =  \frac{\tilde{v}_j - z}{|\tilde{v}_j-z|},
\end{equation}
 where $v_i$ and $\tilde{v}_j$ are vertices in $\partial_zP$ and $\partial_zQ$, respectively, that define congruent spanning edges $l_{zv_i}$ and $l_{z\tilde{v}_j}$ of the supporting cones (see Figure \ref{suppcones}).
 
 \begin{figure}[h]
 	\centering
 	\includegraphics[scale=0.4]{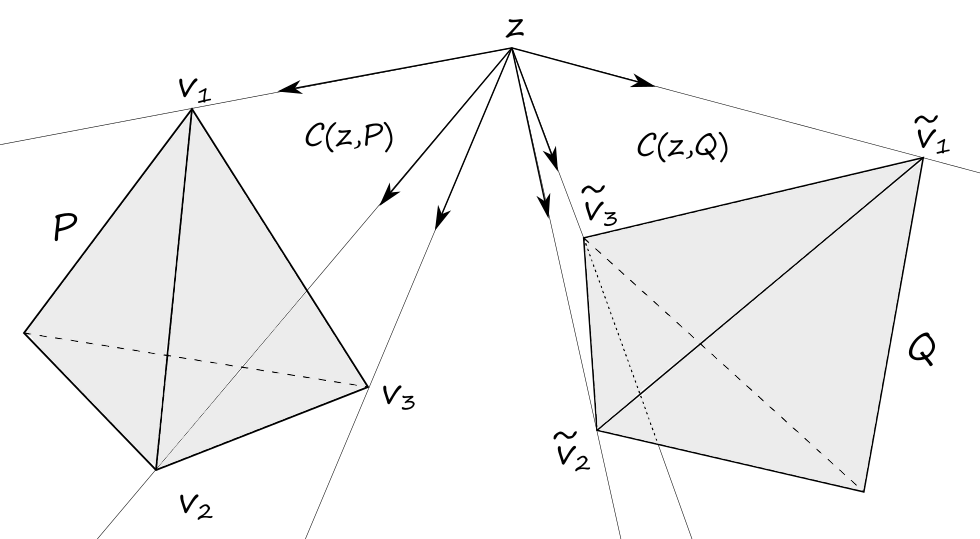}
 	\caption{Spanning edges are pairwise congruent, $\varphi_z(l_{zv_i}) = l_{z\tilde{v}_j}$. }
 	\label{suppcones}
 \end{figure}

This implies that any ``light source'' $z \in rS^{d-1}$ corresponds to a permutation $\sigma_z$ of a finite set of spanning edges of both cones $C(z,P)$ and $C(z,Q)$. According to the considerations from \cite{KP} (see the proof of Theorem 1 there) and the condition of congruency of the cones, for each connected component $U_i$, these sets have the same number of elements, say, $k$. Hence, for any fixed $z\in U_1$ we obtain a bijective correspondence $f_z$ induced by $\varphi_z$ between the set of all vertices  $\{v_1,v_2,...,v_k\} $ of the shadow boundary $\partial_z P$ and the set of all  vertices $\{\tilde{v}_1,\tilde{v}_2,...,\tilde{v}_k\}$ of the shadow boundary $\partial_z Q$.


Take a closed spherical cap with a  non-empty interior $W \subset U_1$. For any $z \in W$, we have at least one map $f_z: \{v_1,v_2,...,v_k\} \to \{\tilde{v}_1,\tilde{v}_2,...,\tilde{v}_k\} $, such that $f_z(v_i) = \tilde{v}_{\sigma_z(i)}$,  and $\sigma_z$ is a permutation of the set $\{1,2,...,k\}$ satisfying $  \varphi_z\big(\frac{v_i - z}{|v_i-z|}\big) =  \frac{\tilde{v}_{\sigma(i)} - z}{|\tilde{v}_{\sigma(i)}-z|}$. The set  of all such possible maps $\{f_z\}_{z \in W}$ is finite. We have
$$
W = \bigcup\limits_{\sigma \in {\mathcal P}_k} V_{\sigma},\quad V_{\sigma} = \{z \in W:\, \exists f_z \,\,\textrm{such that}\,\,f_z(v_i)=\tilde{v}_{\sigma(i)}\quad\forall i=1,\dots,k\},
$$
where $ {\mathcal P}_k$ is the set of all permutations of $\{1,2,...,k\}$.

Observe that each $V_{\sigma}$ is a closed set (it might be empty). Indeed, let $\{z_m\}_{m=1}^{\infty}$ be a convergent sequence of points of a non-empty $V_{\sigma}$, and let $\lim\limits_{m\to\infty}z_m=z$.
We have $\varphi_{z_m}\big(\frac{v_i - z_m}{|v_i - z_m|}\big)=\frac{\tilde{v}_{\sigma(i)}- z_m}{|\tilde{v}_{\sigma(i)} - z_m|}$, where $\sigma$ is independent of $z_m$, since  $ \forall m \in \mathbb{N}, z_m \in V_{\sigma}$. Since $O(d)$ is compact, there exists $\tilde{\varphi}_{z} = \lim\limits_{m \to \infty} \varphi_{z_m}$.  Note that $\tilde{\varphi}_z$ may not coincide with $\varphi_z$. This yields
$$
\tilde{\varphi}_{z}\Big(\frac{v_i - z}{|v_i - z|}\Big)=\frac{\tilde{v}_{\sigma(i)}- z}{|\tilde{v}_{\sigma(i)} - z|}
$$
In other words, there exists $\tilde{\varphi}_{z}$, such that the corresponding $\tilde{f}_{z}$ satisfies $\tilde{f}_{z}(v_i)=\tilde{v}_{\sigma(i)}, \forall i=1,\dots,k$. This means that $z \in V_{\sigma}$ and $V_{\sigma}$ is a closed set.

By the Baire category Theorem (see, for example,  \cite{R}, pages 42-43) there exists a permutation $\sigma_o$ such that the interior $U_o$ of $V_{\sigma_o}$ is non-empty.

\subsection{Two-dimensional faces of supporting cones}

For each edge of $P$ with vertices $x,y$, there is a $U_0 \subset U_i$ as above and an edge of $Q$ with vertices $p,q$ such that under the permutation $\sigma_0$ we have $\sigma_0(x) = p, \sigma_0(y) = q$.


\begin{figure}[h]
	\centering
	\includegraphics[scale=0.3]{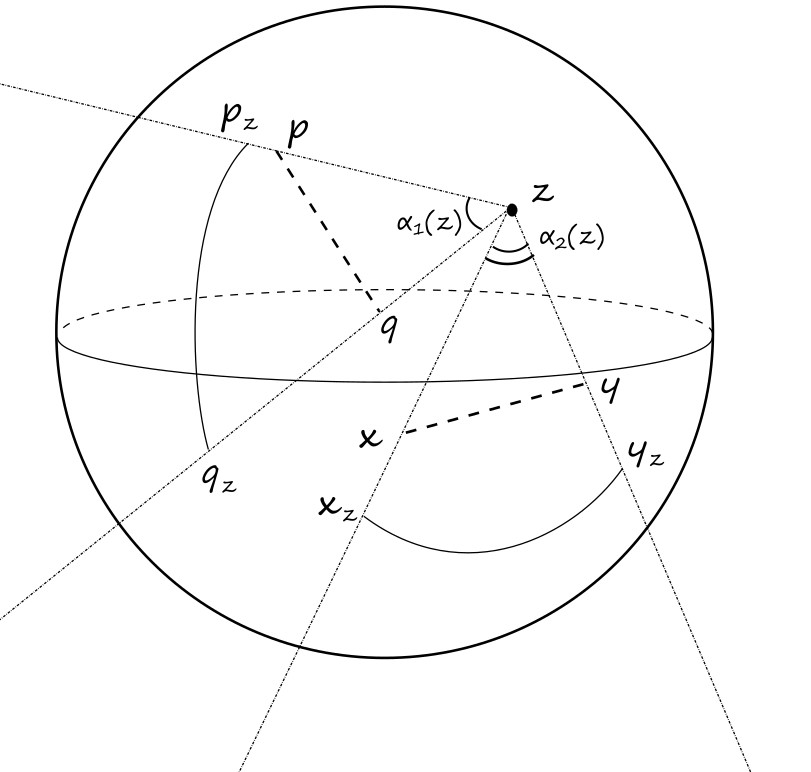}\\
	\caption{$\angle xzy = \angle pzq$ for any $z \in U_0$.}\label{ed}
\end{figure}

 From the condition of congruency of the supporting cones, the angles are equal $\alpha_2(z)=\angle pzq = \angle xzy = \alpha_1(z)$ for any $z \in U_0$ (see Figure \ref{ed}). We may extend both of the functions $\alpha_1(z), \alpha_2(z)$ naturally on the whole sphere, i.e. $\alpha_1(z) = \angle xzy$, $\alpha_2(z) = \angle pzq$ for any $z \in r S^{d-1}$.

Recall that $rS^{d-1}$ is an analytic image of a $(d-1)$-dimensional parallelepiped $\Lambda = [0, \pi]\times[0,\pi]\times\ldots[0,\pi] \times [0,2\pi] \subset \mathbb{E}^{d-1}$ with the parametrization $z = (x_1,\ldots,x_{d-1},x_d)$, where
$$
x_1 = r \cos \varphi_1
$$
$$
x_2 = r \sin \varphi_1 \cos \varphi_2
$$
$$
x_3 = r \sin \varphi_1 \sin \varphi_2 \cos \varphi_3
$$
$$
\ldots
$$
$$
x_{d-1} = r \sin \varphi_1 \ldots \sin \varphi_{d-2} \cos \varphi_{d-1}
$$
$$
x_d = r \sin \varphi_1 \ldots \sin \varphi_{d-2} \sin \varphi_{d-1},
$$
and $\{\varphi_1,\ldots,\varphi_{d-2}\} \in [0,\pi]$, $\varphi_{d-1} \in [0,2\pi]$.
Also,
$$
\cos \alpha_1(z) = \frac{(z-x) \cdot (z-y)}{|z-x||z-y|}, \quad \cos \alpha_2(z) = \frac{(z-p) \cdot (z-q)}{|z-p||z-q|}.
$$
We see that $\cos \alpha_i(z)$ is an analytic function on $\Lambda$, and $$
\cos \alpha_1(z(x_1, x_2,\ldots,x_{d-1})) = \cos \alpha_2(z(x_1, x_2,\ldots,x_{d-1})),
$$
for any $ (x_1, x_2, \ldots, x_{d-1}) \in z^{-1}(U_0) \subset \Lambda\subset \mathbb{E}^{d-1}$, where $z^{-1}(U_0)$ stands for the pre-image of $U_0$ in $\Lambda$. Hence, by \cite{O}, the functions coincide on the whole $\Lambda$. Observe that, by the construction, $\alpha_i(z) \in [0,\pi)$, hence $\cos \alpha_1(z) = \cos \alpha_2(z)$ is equivalent to $\alpha_1(z) = \alpha_2(z)$. Thus,
\begin{equation}\label{angles}
\alpha_1(z) \equiv \alpha_2(z) \quad \forall z \in rS^{d-1}.
\end{equation}
Consider a point $z_0$, such that $z_0 \in rS^{d-1} \cap l$, where $l$ is the line containing the segment $pq$. Notice that $\alpha_1(z_0)=0$, hence $\alpha_2(z_0)=0$. This implies that both segments belong to the same line $l$. We consider the restriction of $\alpha_i(z)$ onto $span\{l, O\} \cap rS^{d-1} \cong rS^1$. If $l$ passes through the origin, we take any two dimensional subspace containing it. We have
$$
\alpha_1(z) \equiv \alpha_2(z) \quad \forall z \in rS^1.
$$
By Lemma 2.1 from \cite{KK}, we conclude that $x=p, y = q.$

Now, since all the corresponding vertices coincide in each connected component $U_i$ for both polytopes, then $P$ and $Q$ coincide as well.

\begin{remark}
	In the proof above, the sphere $rS^{d-1}$ that contains both polytopes $P$ and $Q$ can be substituted with any convex closed analytic hypersurface, since we did not use the parametrization of the sphere, but only its analyticity.
\end{remark}

\section{Proof of Theorem \ref{sth2}}


To prove Theorem \ref{sth2}, it is enough to repeat the above proof of Theorem \ref{sth1} with some small modifications. First, congruency of $P_z$ and $Q_z$ implies that their vertices are congruent. Here by a vertex of a spherical projection we mean the image $(v_i)_z$ of  a vertex in the shadow boundary $v_i \in \partial_zP$. Then, by the analogy to \eqref{map}, there exists $\varphi_z \in O(d)$, such that $\varphi_z\big((v_i)_z\big) = (\tilde{v}_j)_z$, where $v_i$ and $\tilde{v}_j$ are vertices in $\partial_zP$ and $\partial_zQ$ respectively. This defines permutations between the sets of vertices. We proceed as in the previous proof, until the consideration of the angles $\angle xzy$ and $\angle pzq$. Let $\Pi(x,y,z)$ be the $2$-dimensional plane containing three distinct points $x,y,z$. The projections of edges $xy$ and $pq$ are the edges of spherical projections, i.e. arcs of (small) circles $(xy)_z$ and $(pq)_z$. Under the orthogonal transformation, they must coincide $\varphi_z\big((xy)_z\big) = (pq)_z$. This implies that circles $\Pi(x,y,z) \cap rS^{d-1}$ and $\Pi(p,q,z) \cap rS^{d-1}$ have equal radii. The equality of lengths of the arcs implies equality of the angles $\angle xzy =\angle pzq$ as before in \eqref{angles}. The conclusion follows.

\begin{remark}
It seems likely that congruency of sight cones and congruency of spherical projections from any point on the sphere are equivalent. However, we remark that for a fixed point this is not the case. Consider the sphere $S$ defined as $x^2 + y^2 + (z+1)^2=1$ and the cone $C_1$, $z^2 = x^2 + y^2, z \leq 0$. Then the intersection of these two surfaces is a unit circle $S_1$, $x^2+y^2=1, z= -1$. Now consider a unit circle $S_2$ obtained by intersecting sphere $S$ with the plane $z=x-1$. Both $S_1$ and $S_2$ are congruent, but the cone $C_2 = span\{0, S_2 \}$ is strictly elliptical (not circular), thus, not congruent to $C_1$.
\end{remark}

\end{document}